\definecolor{carminepink}{rgb}{0.92, 0.3, 0.26}
\journal{Journal of Mathematical Analysis and Applications}
\newtheorem{theorem}{Theorem}[section]
\newtheorem*{theorem*}{Theorem 1.1}
\newtheorem*{acknow*}{Acknowledgement}
\newtheorem{corollary}[theorem]{Corollary}
\newtheorem{lemma}[theorem]{Lemma}
\newtheorem{proposition}[theorem]{Proposition}
\newtheorem{remark}{Remark}[section]
\DeclareSymbolFont{bbold}{U}{bbold}{m}{n}
\DeclareSymbolFontAlphabet{\mathbbold}{bbold}
\begin{document}
	
	\begin{frontmatter}	
	\title{Tail Behaviour of Mexican Needlets}

	\author[mainaddress,secondary]{Claudio 		Durastanti\fnref{myfootnote}}
		\ead{claudio.durastanti@gmail.com}
		\address[mainaddress]{Dipartimento di Matematica, Università di Roma ``Tor Vergata"}
		\address[secondary]{Fakult\"at f\"ur Matematik, Ruhr Universit\"at, Bochum}
		\fntext[myfootnote]{This research is supported by the ERC Grant n. 277742
			Pascal and by the German DFG grant GRK 2131.} 
			
		\begin{abstract}
			\noindent 
		In this paper we study the tail behaviour of Mexican needlets, a class of
		spherical wavelets introduced by Geller and Mayeli in \cite{gm2}. More specifically, we provide an
		explicit upper bound depending on the resolution level $j$ and a parameter $s$ governing the shape of the Mexican needlets.
		\end{abstract}
		
		\begin{keyword}
			Wavelets \sep Mexican needlets \sep
			 sphere \sep concentration properties \sep spatial localization.
			\MSC[2010] 42C40 \sep 42C10 \sep 42C15.
		\end{keyword}
		
	\end{frontmatter}

\section{Introduction}

A lot of interest has recently been focussed on various forms of spherical
wavelets (see, for example, \cite{ant, da,fred,hol,npw1,wiaux} and the references therein). This attention has also
been fuelled by strong applied motivations, for instance in Astrophysics and
Cosmology. More specifically, we refer to spherical Mexican hat
wavelets (see, for instance, \cite{mcewen}), axisymmetric, directional, and steerable wavelets (cf., for example, \cite{mcewen2,mcewen3,mcewen4,wiaux}), ridgelets and curvelets (see, for instance, \cite{mcewen5,starck}). 

Many theoretical and applied papers have been concerned, in particular, with the so-called spherical
needlets, which were introduced into the Functional Analysis literature by 
\cite{npw1,npw2}. Loosely speaking, the latter can be envisaged as a
convolution of the spherical harmonics with a weight function which is
smooth and compactly supported in the harmonic domain (more details will be
given below). Localization properties in this framework were fully
investigated by \cite{npw1,npw2}.
Needlets have been recently generalized to various directions. Spin and mixed needlets were constructed over spin fiber bundles in \cite{gmspin,gmmixed}, respectively. Needlet-like wavelets were also developed on the unitary ball in \cite{tredneed,petruxu} and over compact manifolds in \cite{knp}. This framework has been also extended to allow for an
unbounded support in the frequency domain by \cite{gm2}, see also \cite{gm1,gm3}; the latter construction is usually labelled as Mexican needlets.
Examples of applications, mainly related to the study of Cosmic Microwave Background (CMB) radiation, can
be found in \cite{cammar,dll,lanmar2,mayeli,scodeller}. 

As described in details below, Mexican needlets enjoy
excellent localization properties in the real domain; in this paper, we
investigate the relationship between the tail decay and the exact shape of
the weight function. Indeed, the aim of this work is to provide analytic
expressions to bound the tail behaviour in the real domain. We prove here that the
tails are Gaussian up to a polynomial term, whose dependence on the choice
of the kernel can be identified explicitly. \\More specifically, for any (multi)resolution level $j$, consider a partition of the sphere into a set (of cardinality $N_j$) of spherical subregions of area $\lambda_{jk}$ and midpoint $\xi_{jk}$, $k=1,\ldots,N_j$. For any $k$, let $\vartheta:=\vartheta_{jk}\left(x\right)$ denote the geodesic distance between a generic coordinate $x\in \mathbb{S}^2$ and $\xi_{jk}$. For the scale parameter  $B>1$ and the shape parameter $s\in \mathbb{N}$, we shall consider wavelet filters of the form 
\begin{equation*}
\Psi _{jk ;s}\left( \vartheta \right) :=
\frac{\sqrt{\lambda_{jk}}}{2 \pi}\sum_{\ell=0}^{\infty }\left( \left( \frac{ \ell+\frac{1}{2}}{B^j}\right) \right)
^{2s}\exp \left( -\left( \frac{\ell+\frac{1}{2}}{B^j}\right) ^{2}\right) \left(
2\ell+1\right) P_{\ell}\left( \cos \vartheta \right),%
\end{equation*}%
where $P_{\ell}\left( \cdot \right)$ denotes the standard Legendre polynomial
of degree $\ell$. In Theorem \ref{theo:loc}, we shall be able to show that 
\begin{equation*}
\left\vert \Psi _{jk ;s}\left( \vartheta \right) \right\vert \leq
C_{s}B^j e^{-\left( 2B^j\varepsilon \right) ^{2}}\left( 1+\left\vert H_{2s}\left( B^j\vartheta\right) \right\vert \right),
\end{equation*}%
where $C_{s}$ is a positive constant and $H_{2s}\left( \cdot \right) $ identifies the Hermite polynomial of
degree $2s$.

It is important to remark that in \cite{gm2} the authors obtained an
analogous expression for the $n$-dimensional sphere, limiting their
investigation to the case of the shape parameter $s=1$, which can be linked to the spherical Mexican hat wavelets (see Remark \ref{scodellerlink} and \cite{scodeller}). In this paper, we
will extend this bound for any choice of $s\in \mathbb{N}$. 
Our argument
exploits a technique similar to the one used by Narcowich, Petrushev and
Ward in \cite{npw1} (see also \cite{npw2} and the textbook \cite[Section 13.3]{marpecbook}). Furthermore, an analogous method was developed in \cite{mcewen4} to establish concentration properties of spin directional wavelets.   
In our proof, we will also exploit the analytic form of the weight function
to compute exactly its Fourier transform in terms of Hermite polynomials;
this will also allow us to investigate explicitly the roles of the
resolution level $j$ and of the shape parameter $s$. We also establish bounds on the $L^p$-norms of the Mexican needlets, depending on the resolution level $j$ and on the scale parameter $B$ (see Corollary \ref{coronorms}). Furthermore, in Proposition \ref{delta_s} we provide an explicit connection between Mexican needlets with different shape by means of the spherical Laplacian operator.

The plan of this paper is as follows. In Section \ref{sec:costr} we recall
the definition and some pivotal properties of Mexican needlets; in Section %
\ref{sec:loc} we exploit our main theorem while Section \ref{sec:aux}
collects some auxiliary results.

\section{The construction of Mexican needlets\label{sec:costr}}

In this Section we shall review Mexican needlets, as developed by Geller and
Mayeli, see \cite{gm1,gm2,gm3}. As already mentioned and
similarly to standard needlets (cf. \cite{npw1,npw2}), Mexican
needlets can be viewed as a combination of Legendre polynomials weighted by
a smooth window function.\\ On one hand, recall the well-known decomposition of $L^{2}\left( \mathbb{S}^{2}\right)$, the space of the square-integrable functions over the sphere, given by 
\begin{equation*}
L^{2}\left( \mathbb{S}^{2}\right) =\bigoplus_{\ell\geq 0}H_{\ell},
\end{equation*}%
where $H_{\ell}$ is the space of the homogeneous polynomials of degree $\ell$,
spanned by the spherical harmonics $\left\{ Y_{\ell m},m=-\ell,\ldots,\ell\right\}$. Therefore, spherical harmonics provide an orthonormal basis for $L^2\left(\mathbb{S}^2\right)$. For further details, the reader is referred, for example, to the textbook \cite{steinweiss}); here we just recall the so-called summation formula, i.e., for any $\ell \geq 0$ and for any $x,y \in \mathbb{S}^2$, 
\begin{equation}\label{summation}
\sum_{m=-\ell}^{\ell} \overline{Y}_{\ell m} \left(x\right)Y_{\ell m}\left(y\right)=\frac{2\ell+1}{4\pi}P_{\ell}\left(\langle x,y\rangle\right), 
\end{equation}
where $\langle \cdot,\cdot\rangle$ denotes the geodesic distance over the sphere and $P_{\ell}\left(\cdot\right)$ is the Legendre polynomial of order $\ell$, given by  
\begin{equation*}
P_{\ell}\left( u\right) :=\frac{1}{2^{\ell}\ell!}\frac{d^{\ell}}{du^{\ell}}\left(
u^{2}-1\right) ^{\ell}, \quad u\in \left[ -1,1\right],
\end{equation*}%
see, for example, \cite[ Chapter 22, Eqq. (22.1.6) and
(22.2.10)]{abramostegun}.
On the other hand, consider the window (or weight) function $f_{s}:\mathbb{R}\rightarrow 
\mathbb{R}^{+}$ 
\begin{equation}
f_{s}\left( t\right) :=t^{2s}e^{-t^{2}}, \quad t\in \mathbb{R},
\label{weight}
\end{equation}%
for $s\in \mathbb{N}$, so that, for any $t\in \mathbb{R}$, we have that  
\begin{equation*}
0<\frac{m_{B;s}}{\log B}\leq \sum_{j=-\infty }^{\infty }f_{s}^{2}\left( 
\frac{t}{B^{j}}\right) \leq \frac{M_{B;s}}{\log B}<\infty,
\label{emme}
\end{equation*}%
where%
\begin{align*}
m_{B;s} &:=\eta _{s}\left( 1-O\left( \left\vert \left( \frac{B^{\frac{1}{2}%
}-1}{B^{\frac{1}{2}}}\right) ^{2}\log \left( \frac{B^{\frac{1}{2}}-1}{B^{%
\frac{1}{2}}}\right) \right\vert \right) \right) \text{ ,} \\
M_{B;s} &:=\eta _{s}\left( 1+O\left( \left\vert \left( \frac{B^{\frac{1}{2}%
}-1}{B^{\frac{1}{2}}}\right) ^{2}\log \left( \frac{B^{\frac{1}{2}}-1}{B^{%
\frac{1}{2}}}\right) \right\vert \right) \right) \text{ ,}
\end{align*}%
as $B\rightarrow 1$, $B>1$, and 
\begin{equation*}
\eta _{s}:=\int_{0}^{\infty }f_{s}^{2}\left( t\ell\right) \frac{dt}{t}=\frac{%
\Gamma \left( 2s\right) }{2^{2s+1}}, \label{eta}
\end{equation*}
see also see also \cite{gm2}.

In \cite{gm2} (see also \cite{gm1,gm3}), it was proven that, for
any given resolution level $j\in \left( -\infty ,\infty \right) $, there exists a finite
set of measurable subregions of the sphere $\left\{ E_{jk}\right\} _{k=1}^{N_{j}}$, of
diameter $\rho _{jk}$ and area $\lambda _{jk}$, such that 
\begin{align*}
& \cup _{k=1}^{N_{j}}E_{jk}=\mathbb{S}^{2},  \notag \\
& E_{jk_{1}}\cap E_{jk_{2}} =\emptyset, \text{ for any }k_{1}\neq k_{2},\notag \\
& \rho _{jk} \leq c_{B}B^{-j},  \label{diam}
\end{align*}%
where $c_{B}>0$, $B>1$. Each of these regions can be indexed by a point $\xi
_{jk}\in E_{jk}$, typically chosen as its midpoint. For $x,y\in \mathbb{S}^{2}$, let $W_{j;s}:\mathbb{S}^{2}\times \mathbb{S}%
^{2}\rightarrow \mathbb{R}$ be defined as follows %
\begin{equation*}
W_{j;s}\left( x,y\right) :=\sum_{\ell\geq 0}f_{s}\left( \frac{\ell+\frac{1}{2}}{%
B^{j}}\right) \frac{\left( \ell+\frac{1}{2}\right) }{2\pi }P_{\ell}\left(
\left\langle x,y\right\rangle \right).
\end{equation*}%
Consider now the kernel operator $K_{j;s}$ on $L^{2}\left( \mathbb{S}%
^{2}\right) $: 
\begin{equation*}
K_{j;s}F\left( x\right) =\int_{S^{2}}W_{j;s}\left( x,y\right) F\left(
y\right) dy, \quad  F\in L^{2}\left( \mathbb{S}^{2}\right) \text{.}
\end{equation*}%
For $\delta _{0}>0$ sufficiently small, it is shown in \cite{gm2} that if, for any $k$ and for any $j$ so that $%
c_{B}B^{-j}<\delta _{0}$, the area $%
\lambda _{jk}$ is comparable with $\left( c_{B}B^{-j}\right) ^{2}$, then, for $%
\varepsilon >0$, 
\begin{equation*}
\left( m_{B;s}-\varepsilon \right) \left\Vert F\right\Vert _{L^{2}\left( 
\mathbb{S}^{2}\right) }^{2}\leq \sum_{j=-\infty }^{\infty
}\sum_{k=1}^{N_{j}}\lambda _{jk}\left\vert K_{j;s}F\left( \xi _{jk}\right)
\right\vert ^{2}\leq \left( M_{B;s}+\varepsilon \right) \left\Vert
F\right\Vert _{L^{2}\left( \mathbb{S}^{2}\right) }^{2}.
\end{equation*}

\begin{remark}
Let us introduce preliminarily the notation $a\approx b$ if there exist $c^{\prime },c^{\prime \prime }>0$ so
that $c^{\prime }b\leq a\leq c^{\prime \prime }b$. In many practical
applications, the set of $E_{jk}$, labelled by the pair $\left( \xi
_{jk},\lambda _{jk}\right) $, can be identified with those evaluated by
common packages such as HealPix (see for instance \cite{healpix}), where,  for any $j$, 
$\lambda _{jk}\approx 4\pi \rho _{jk}^{2}$. Partitioning the sphere
into $N_{j}\approx B^{2j}$ regions $E_{jk}$, we have that 
\begin{equation*}
\mu \left( \cup
_{k=1}^{N_{j}}E_{jk}\right) =4\pi =\mu \left( \mathbb{S}^{2}\right),
\end{equation*}
where $\mu $ denotes the area. Hence, for the sake of simplicity, we
consider, for any $j$, 
\begin{align*}
\lambda _{jk} \approx &B^{-2j}\text{, }k=1,...,N_{j},
\label{lambdasize} \\
N_{j} \approx &B^{2j}. 
\end{align*}
\end{remark}

Let us now define $\Psi _{jk;s}:\mathbb{S}^{2}\rightarrow \mathbb{R}$ as%
\begin{equation}
\Psi _{jk;s}\left( x\right) :=\sqrt{\lambda _{jk}}K_{j;s}\left( x,\xi
_{jk}\right) \text{ .}  \label{mex1}
\end{equation}%
An alternative form of Mexican needlets can be given by 
\begin{align}
\notag \psi _{jk;s}\left( x\right) :=\sqrt{\lambda _{jk}}\sum_{\ell=0}^{\infty
}f_{s}\left( \frac{\sqrt{-e_{\ell}}}{B^{j}}\right) \sum_{m=-\ell}^{\ell}\overline{Y}%
_{\ell m}\left( \xi _{jk}\right) Y_{\ell m}\left( x\right)\\
\sqrt{\lambda _{jk}}\sum_{\ell=0}^{\infty
}f_{s}\left( \frac{\sqrt{-e_{\ell}}}{B^{j}}\right) \frac{2 \ell +1}{4\pi} P_{\ell}\left(\langle x, \xi_{jk}\rangle \right),
\label{mexicantot}
\end{align}%
$\left\{ e_{\ell}\right\} $ denoting the spectrum of the spherical Laplacian $%
\Delta _{\mathbb{S}^{2}}$ associated to the eigenfunctions $\left\{
Y_{\ell m}\right\} $, i.e., $e_{\ell}=-\ell(\ell+1),$ whence%
\begin{equation*}
\left( \Delta _{\mathbb{S}^{2}}-e_{\ell}\right) Y_{\ell m}\left( x\right) =0.
\end{equation*}%
Note that the last equality in \eqref{mexicantot} is due to \eqref{summation}.
\begin{remark}\label{remark:equivalence} \eqref{mexicantot} is the standard definition of Mexican needlets in the literature, while Theorem \ref{theo:loc} is focussed on \eqref{mex1}.
Consider, anyway, that the difference between \eqref{mex1} and \eqref{mexicantot} concerns only  the argument of $f_{s}\left( \cdot \right) $. In the former, the argument is given by the square root of the eigenvalue of the Laplacian operator $-e_{\ell}$, while in the latter it is replaced
by $\ell+\frac{1}{2}$. This formulation is instrumental for the derivation of the
localization property in Section \ref{sec:loc} (cf., more specifically, \eqref{Kappa}), as in \cite{npw1} for the
standard needlet case (see also Section 13.3 in the Appendix of \cite%
{marpecbook}). This is a minor difference, asymptotically negligible
considering that, trivially,%
\begin{equation*}
\lim_{j\rightarrow \infty }\lim_{\ell\rightarrow \infty } \frac{f_{s}\left( \frac{\sqrt{-e_{\ell}}}{B^j} \right) }{f_{s}\left(\frac{ \ell+\frac{1}{2}}{B^j} \right) }=\lim_{j\rightarrow \infty }\lim_{\ell\rightarrow \infty }\frac{\left( \frac{%
\ell\left( \ell+1\right) }{B^{2j}}\right) ^{s}\exp \left( -\frac{\ell\left(
\ell+1\right) }{B^{2j}}\right) }{\left( \frac{\left(\ell+\frac{1}{2}\right) ^{2}}{%
B^{2j}}\right) ^{s}\exp \left( -\frac{\left( \ell+\frac{1}{2}\right) ^{2}}{%
B^{2j}}\right) }=1\text{ .}
\end{equation*}%
As a consequence, for any $x\in\mathbb{S}^2$, we obtain an asymptotic equivalence between $\Psi _{jk;s}\left(x\right)$ and $\psi _{jk;s}\left(x\right)$.  Hence, the localization property, proved in Theorem \ref{theo:loc} for \eqref{mex1}, holds also for the Mexican needlets given by \eqref{mexicantot}.
\end{remark}
For $F \in L^2\left(\mathbb{S}^2\right)$ and for any $j,k$, let the Mexican needlet coefficients be given by
\begin{equation*}
\beta _{jk;s}:=\left\langle F,\psi _{jk;s}\right\rangle _{L^{2}\left( 
\mathbb{S}^{2}\right) }\text{ .}  \label{beta}
\end{equation*}
It is proven in \cite{gm2} that there exists a constant $C_{0}=C_{0}\left(
B,c_{B},f_{s}\right) $ such that 
\begin{equation*}
\left( m_{B;s}-C_{0}\right) \left\Vert F\right\Vert _{L^{2}\left( \mathbb{S}%
^{2}\right) }^{2}\leq \sum_{j=-\infty }^{\infty
}\sum_{k=1}^{N_{j}}\left\vert \beta _{jk}\right\vert ^{2}\leq \left(
M_{B;s}+C_{0}\right) \left\Vert F\right\Vert _{L^{2}\left( \mathbb{S}%
^{2}\right) }^{2}\text{ .}
\end{equation*}%
Hence, if $m_{B;s}-C_{0}>0$, $\left\{ \psi _{jk;s}\right\} $ is a frame for $%
L^{2}\left( \mathbb{S}^{2}\right) $ bounded by $\left( m_{B;s}-C_{0}\right) 
$ and $\left( M_{B;s}+C_{0}\right) $. It holds that 
\begin{equation*}
\frac{M_{B;s}+C_{0}}{m_{B;s}-C_{0}}\sim \frac{M_{B;s}}{m_{B;s}}=1+O\left(
\left\vert \left( \frac{B^{\frac{1}{2}}-1}{B^{\frac{1}{2}}}\right) ^{2}\log
\left( \frac{B^{\frac{1}{2}}-1}{B^{\frac{1}{2}}}\right) \right\vert \right) 
\text{ ,}
\end{equation*}%
as $B\rightarrow 1,B>1$, and that 
\begin{equation}
\sum_{j=-\infty }^{\infty }\sum_{k=1}^{N_{j}}\left\vert \beta
_{jk;s}\right\vert ^{2}=\frac{\eta _{s}\left( 1+\delta \right) }{\log B}%
\left\Vert F\right\Vert _{L^{2}\left( \mathbb{S}^{2}\right) }^{2}\text{ ,} 
\notag
\end{equation}%
where $\delta :=\delta \left( B\right) =O\left( \left\vert \left( \frac{B^{%
\frac{1}{2}}-1}{B^{\frac{1}{2}}}\right) ^{2}\log \left( \frac{B^{\frac{1}{2}%
}-1}{B^{\frac{1}{2}}}\right) \right\vert \right) $ as $B\rightarrow 1,B>1$,
is such that 
\begin{equation*}
\lim_{B\rightarrow 1}\delta \left( B\right) =0,
\end{equation*}
cf. \cite{gm2}.
\begin{remark}
As well-known in the literature, standard needlets describe a tight frame with tightness
constant equal to $1$, allowing for an exact reconstruction formula (cf. 
\cite{npw1,npw2} and the textbook \cite[Section 10.3]{marpecbook}). On the other
hand, Mexican needlets are characterized by a non-compact support in the harmonic domain,
and this makes perfect reconstruction unfeasible for the lack of an exact
cubature formula. Despite these features, Mexican needlets enjoy some
remarkable advantages with respect to the standard ones. More specifically, they
are characterized by an extremely good concentration properties in the real domain. In addition,
it is possible to choose the measurable disjoint sets $E_{jk}$ with minimal
conditions, and still ensure frame constants arbitrarily close to unity (and
hence almost exact reconstruction).
\end{remark}
Note that, in this paper, we investigate the exact
dependence of localization properties upon $s,$ an issue which is extremely
relevant for applications (see, for example, \cite{scodeller}). It may be
noted that the choice of $s$ represents a trade-off between localization in
real and harmonic domain; the latter improves as $s$ increases, while the
reverse holds for the former.\\
\noindent We add here the following result, which establishes a link between Mexican needlets with different shape parameter $s\in\mathbb{N}$.
\begin{proposition}
	\label{delta_s} For any $s\in \mathbb{N}$, where $s>1$, and $x\in \mathbb{S}^{2}$, 
	\begin{equation*}
	\psi _{jk;s}\left( x\right) =\left( -1\right) ^{s}B^{-2js}\left( \Delta _{%
		\mathbb{S}^{2}}\right) ^{s}\psi _{jk;1}\left( x\right) .
	\end{equation*}
\end{proposition}
\begin{proof}
	Easy calculations lead to%
	\begin{align*}
	&-B^{-2j}\Delta _{\mathbb{S}^{2}}\psi _{jk;s}\left( x\right) \\
		& \quad\quad\quad\quad\quad =-\Delta _{\mathbb{S}^{2}}\left( \frac{\sqrt{\lambda _{jk}}}{B^{2j}}%
		\sum_{\ell\geq 0}\left( \frac{-e_{\ell}}{B^{2j}}\right) ^{s}\exp \left( \frac{e_{\ell}%
		}{B^{2j}}\right) \sum_{m=-\ell}^{\ell}\overline{Y}_{\ell m}\left( \xi _{jk}\right)
		Y_{\ell m}\left( x\right) \right) \\
	& \quad\quad\quad\quad\quad =\sqrt{\lambda _{jk}}\sum_{\ell\geq 0}\left( \frac{-e_{\ell}}{B^{2j}}\right)
		^{s+1}\exp \left( \frac{-e_{\ell}}{B^{2j}}\right) \sum_{m=-\ell}^{\ell}\overline{Y}%
		_{\ell m}\left( \xi _{jk}\right) Y_{\ell m}\left( x\right)\\
	& \quad\quad\quad\quad\quad =\psi _{jk;s+1}\left( x\right) .
	\end{align*}%
	Iterating the procedure, we obtain the statement.
\end{proof}

\noindent Before concluding this Section, for $\xi _{jk},x\in \mathbb{S}^{2}$, let us label the geodesic distance 
by  
\begin{equation*}\label{geodesic}
\vartheta :=\vartheta _{jk}\left( x\right) =\langle x,\xi
_{jk}\rangle ,
\end{equation*}%
so that we can express the Mexican needlets given by (\ref{mex1}) in terms
of $\vartheta $ 
\begin{equation}
\Psi _{jk;s}\left( \vartheta \right) :=\sqrt{\lambda _{jk}}\frac{1}{2\pi }%
\sum_{\ell=0}^{\infty }f_{s}\left( \frac{\left( \ell+\frac{1}{2}\right) }{B^{j}}%
\right) \left(\ell+\frac{1}{2}\right) P_{\ell}\left( \cos \vartheta \right) .\label{mexican}
\end{equation}

\section{The localization property\label{sec:loc}}
The aim of this Section is to achieve an exhaustive proof of the so-called
localization property, i.e., to establish an upper bound for the supremum of
the modulus of the Mexican needlet defined by (\ref{mexican}), remarking its
dependence on the resolution level $j$ and on the shape parameter $s$, up to
a multiplicative constant. This result is given in the Theorem \ref{theo:loc}%
. We stress again that this achievement was pursued implicitely by Geller
and Mayeli in \cite{gm2}, where the authors anyway found a similar result
studying (\ref{mexican}) for small and large angles, even if they limited
their investigations to the case $s=1$. Here, instead, we extend this
result to any value of the shape parameter $s$ in (\ref{weight}), holding for
any value of $\vartheta $ by means of a unique procedure, which resembles the
one employed by Narcowich, Petrushev and Ward in \cite{npw1} to exploit the
localization property for standard needlets on the $n$-dimensional sphere $%
\mathbb{S}^{n}$ (see also \cite{npw2} and the textbook \cite[Section 13.3]{marpecbook}). In
this case, howsoever, we will take advantage of the explicit formulation of
the weight function \eqref{weight}, which allows us to compute exactly its
Fourier transform in terms of Hermite polynomials and, through that, to
exploit precisely the dependence on the resolution level $j$ of the $\sup
\left\vert \Psi _{jk;s}\left( \vartheta \right) \right\vert $. For the sake
of simplicity, let us introduce the following notation
\begin{equation*}
\varepsilon =\varepsilon \left( B,j\right) :=B^{-j}\text{ ,}
\end{equation*}%
so that, we can define
\begin{equation}
\Psi _{\varepsilon ;s}\left( \vartheta \right) :=\frac{1}{2\pi }%
\sum_{\ell=0}^{\infty }f_{s}\left( \varepsilon \left( \ell+\frac{1}{2}\right)
\right) \left( \ell+\frac{1}{2}\right) P_{\ell}\left( \cos \vartheta \right). \label{mexican2}
\end{equation}

\begin{remark}
\label{relation}Observe that%
\begin{equation*}
\Psi _{jk;s}\left( x\right) =\Psi _{jk;s}\left( \vartheta\right) =\sqrt{\lambda _{jk}}\Psi _{\varepsilon ;s}\left( \vartheta
 \right) ,
\end{equation*}%
 Furthermore, in \eqref{mexican2}, while the index $\varepsilon $ substitutes $j$, the index $k$ is no more necessary. 
As stressed above, $\lambda _{jk}$ does not appear in \eqref{mexican2}, while Theorem \ref%
{theo:loc} holds for any $\vartheta \in \left[ 0,\pi \right] $. The
dependence on $k$ arises when we choose $\vartheta =\vartheta _{jk}\left(
x\right) $.
\end{remark}

\begin{theorem}
\label{theo:loc}Let $\Psi _{jk;s}\left( \vartheta \right) $ be
given by \eqref{mex1}. Then, for any $s\in \mathbb{N}$ and $k=1,\ldots,N_{j}$, there
exists $C_{s}>0$ such that 
\begin{equation*}
\left\vert \Psi _{jk;s}\left( x\right) \right\vert \leq C_{s}B^{j}e^{-\frac{%
		B^{2j}}{4}\vartheta ^{2}\left( x\right) }\left( 1+\left\vert B^{j}\vartheta
\left( x\right) \right\vert ^{2s}\right),
\end{equation*}
uniformly over $j$.
\end{theorem}

\begin{proof}
By using the Mehler-Dirichlet representation formula (see, for instance, \cite[Formula 4.8.7, pag. 86]{szego}), the Legendre polynomial of degree $\ell$ can
be written as 
\begin{equation*}
P_{\ell}\left( \cos \vartheta \right) =\frac{\sqrt{2}}{\pi }\int_{\vartheta
}^{\pi }\frac{\sin \left( \left( \ell+\frac{1}{2}\right) \phi \right) }{\sqrt{%
\cos \vartheta -\cos \phi }}d\phi.
\end{equation*}%
Hence, using \eqref{mexican2}, we have that%
\begin{eqnarray*}
\left\vert \Psi _{\varepsilon ;s}\left( \vartheta \right) \right\vert
&=&\left\vert \frac{1}{2\pi }\sum_{\ell=0}^{\infty }f_{s}\left( \varepsilon
\left( \ell+\frac{1}{2}\right) \right) \left( \ell+\frac{1}{2}\right)
\int_{\vartheta }^{\pi }\frac{\sin \left( \left( \ell+\frac{1}{2}\right) \phi
\right) }{\sqrt{\cos \vartheta -\cos \phi }}d\phi \right\vert \\
&=&\frac{1}{2\pi }\int_{\vartheta }^{\pi }\frac{\left\vert
\sum_{\ell=0}^{\infty }f_{s}\left( \varepsilon \left( \ell+\frac{1}{2}\right)
\right) \left(\ell+\frac{1}{2}\right) \sin \left( \left(\ell+\frac{1}{2}\right)
\phi \right) \right\vert }{\sqrt{\cos \vartheta -\cos \phi }}d\phi \\
&\leq &\frac{1}{2\pi }\int_{\vartheta }^{\pi }\frac{\Lambda _{\varepsilon
,s}\left( \phi \right) }{\sqrt{\cos \vartheta -\cos \phi }}d\phi \text{ ,}
\end{eqnarray*}%
where%
\begin{eqnarray}
\Lambda _{\varepsilon ;s}\left( \phi \right) &:=&\sum_{\ell=0}^{\infty
}f_{s}\left( \varepsilon \left( \ell+\frac{1}{2}\right) \right) \left(\ell+\frac{1%
}{2}\right) \sin \left( \left( \ell+\frac{1}{2}\right) \phi \right)  \notag \\
&=&\sum_{\ell=0}^{\infty }g_{\varepsilon ,\phi ;s}\left( \ell+\frac{1}{2}\right)\notag \\
&=&\frac{1}{2}\sum_{\ell=-\infty }^{\infty }g_{\varepsilon ,\phi ;s}\left( \ell+%
\frac{1}{2}\right). \label{Kappa}
\end{eqnarray}%
In the last equality, we use the fact that%
\begin{equation*}
g_{\varepsilon ,\phi ;s}\left( u\right) :=f_{s}\left( \varepsilon u\right)
u\sin \left( u\phi \right) \text{ , }u\in \mathbb{R}
\end{equation*}%
is an even function.
Using Lemma \ref{lemma_kappa}, we obtain %
\begin{equation}
\left\vert \Psi _{\varepsilon ;s}\left( \vartheta \right) \right\vert \leq 
\frac{\widetilde{C}_{2s+1}}{\varepsilon ^{2}}\left\vert \int_{\vartheta
}^{\pi }\frac{e^{-\left( \frac{\phi }{2\varepsilon }\right)
^{2}}H_{2s+1}\left( \frac{\phi }{2\varepsilon }\right) }{\sqrt{\cos
\vartheta -\cos \phi }}d\phi \right\vert. \label{intlast}
\end{equation}%
Note that 
\begin{equation}
\cos \vartheta -\cos \phi =2\left( \phi ^{2}-\vartheta ^{2}\right) \frac{%
\sin \left( \frac{\vartheta +\phi }{2}\right) \sin \left( \frac{\phi
-\vartheta }{2}\right) }{\left( \frac{\vartheta +\phi }{2}\right) \left( 
\frac{\phi -\vartheta }{2}\right) }\text{ .}  \label{approx_cos}
\end{equation}%
In order to estimate \eqref{intlast}, we consider three different cases. In
case I, we have that $\vartheta \in \left( \delta ,\frac{\pi }{2}\right) $, where $%
0<\delta <\varepsilon $. In case II, $\vartheta \in \left[ \frac{\pi }{2}%
,\pi \right] $. Finally, in case III, we have that $\vartheta \in \left[ 0,\delta \right] $%
, $0<\delta <\varepsilon $, as in \cite{npw1}. In cases I and II, we will prove
that there exists a constant $\widetilde{C}_{s}$ so that%
\begin{equation*}
\left\vert \Psi _{\varepsilon ;s}\left( \vartheta \right) \right\vert \leq 
\frac{\widetilde{C}_{s}}{\varepsilon ^{2}}e^{-\left( \frac{\vartheta }{%
2\varepsilon }\right) ^{2}}\left\vert H_{2s}\left( \frac{\vartheta }{%
2\varepsilon }\right) \right\vert . 
\end{equation*}%
We distinguish between the two cases for technical reasons. Indeed, in case II, the integral in \eqref%
{intlast} will be estimated by using supplementary angles. As far as case III is concerned, we
will prove that there exists $C_{s}^{\prime \prime \prime }$ such that 
\begin{equation*}
\left\vert \Psi _{\varepsilon ;s}\left( \vartheta \right) \right\vert \leq 
\frac{C_{s}^{\prime \prime \prime }}{\varepsilon ^{2}}.
\end{equation*}
\noindent \textit{Case I.} We have that%
\begin{eqnarray*}
0 &<&\frac{\vartheta +\phi }{2}\leq \frac{3}{4}\pi \text{ ,} \\
0 &\leq &\frac{\phi -\vartheta }{2}\leq \frac{\pi }{2}\text{ .}
\end{eqnarray*}%
On one hand, note that \eqref{approx_cos} can be bounded as:%
\begin{eqnarray*}
\cos \vartheta -\cos \phi &\geq &\frac{1}{2}\left( \phi ^{2}-\vartheta
^{2}\right) \frac{\sqrt{2}}{2}\frac{4}{3\pi }\frac{\sqrt{2}}{2}\frac{4}{\pi }
\\
&=&C_{I}\left( \phi ^{2}-\vartheta ^{2}\right), 
\end{eqnarray*}%
where $C_{I}>0$. On the other hand, the integral \eqref{intlast} can be rewritten as%
\begin{equation*}
\left\vert \Psi _{\varepsilon ;s}\left( \vartheta \right) \right\vert \leq 
\frac{\widetilde{C}_{2s+1}^{\prime }}{\varepsilon ^{2}}\left\vert
\int_{\vartheta }^{\pi }\frac{e^{-\left( \frac{\phi }{2\varepsilon }\right)
^{2}}H_{2s+1}\left( \frac{\phi }{2\varepsilon }\right) }{\sqrt{\left( \phi
^{2}-\vartheta ^{2}\right) }}d\phi \right\vert  ,
\end{equation*}%
where $\widetilde{C}_{2s+1}^{\prime }>0$.
Recall that, for $n$ odd and $u\in \mathbb{R}$,
the Hermite polynomials can be rewritten as 
\begin{equation}
H_{n}\left( u\right) =n!\sum_{r=0}^{\frac{n-1}{2}}\frac{\left( -1\right) ^{%
\frac{n-1}{2}-r}}{\left( 2r+1\right) !\left( \frac{n-1}{2}-r\right) !}\left(
2u\right) ^{2r+1}, \label{hermiteodd}
\end{equation}%
(see, for instance, \cite[Chapter 22]{abramostegun}). 
Therefore, we have that
\begin{equation*}
\left\vert \Psi _{\varepsilon ;s}\left( \vartheta \right) \right\vert \leq 
\frac{\widetilde{C}_{2s+1}^{\prime }}{\varepsilon ^{2}}\left( 2s+1\right)
!\left\vert \sum_{r=0}^{s}\frac{\left( -1\right) ^{s-r}2^{2r+1}}{\left(
2r+1\right) !\left( s-r\right) !}Q_{\varepsilon ,r}\left( \vartheta \right) 
\text{ }\right\vert ,
\end{equation*}%
where%
\begin{equation*}
Q_{\varepsilon ,r}\left( \vartheta \right) :=\int_{\vartheta }^{\pi }\frac{%
e^{-\left( \frac{\phi }{2\varepsilon }\right) ^{2}}\left( \frac{\phi }{%
2\varepsilon }\right) ^{2r+1}}{\sqrt{\left( \phi ^{2}-\vartheta ^{2}\right) }%
}d\phi.
\end{equation*}%
Using Lemma \ref{prop:qu}, which establishes that 
\begin{equation*}
Q_{\varepsilon ,r}\left( \vartheta \right) \leq C_{r}e^{-\left( \frac{%
\vartheta }{2\varepsilon }\right) ^{2}}\left( \frac{\vartheta }{2\varepsilon 
}\right) ^{2r},
\end{equation*}
where $C_{r}>0$, we get
\begin{eqnarray*}
\left\vert \Psi _{\varepsilon ;s}\left( \vartheta \right) \right\vert &\leq &%
\frac{\widetilde{C}_{2s+1}^{\prime }}{\varepsilon ^{2}}\left\vert \left(
2s+1\right) !\sum_{r=0}^{s}\frac{\left( -1\right) ^{s-r}2^{2r+1}}{\left(
2r+1\right) !\left( s-r\right) !}Q_{\varepsilon ,r}\left( \vartheta \right)
\right\vert \\
&\leq &\frac{\widetilde{C}_{2s+1}^{\prime }}{\varepsilon ^{2}}e^{-\left( 
\frac{\vartheta }{2\varepsilon }\right) ^{2}}\left\vert \left( 2s\right)
!\sum_{r=0}^{s}C_{r}\frac{\left( -1\right) ^{s-r}2^{2r+1}}{\left( 2r\right)
!\left( s-r\right) !}\left( \frac{\vartheta }{2\varepsilon }\right)
^{2r}\right\vert \\
&=&\frac{C_{s}^{\prime }}{\varepsilon ^{2}}e^{-\left( \frac{\vartheta }{%
2\varepsilon }\right) ^{2}}\left\vert H_{2s}\left( \frac{\vartheta }{%
2\varepsilon }\right) \right\vert, 
\end{eqnarray*}
where $C_{s}^{\prime }>0$.\\

\noindent \textit{Case II.} As in \cite{npw1} (see also \cite[Section
13.3]{marpecbook}, we use the supplementary angles to $\phi$ and $\vartheta$, denoted by $\widetilde{\phi } =\pi -\phi$ and 
$\widetilde{\vartheta } =\pi -\vartheta$, respectively. We can easily observe that%
\begin{eqnarray*}
0 &\leq &\frac{\widetilde{\vartheta }+\widetilde{\phi }}{2}\leq \frac{\pi }{2%
}; \\
0 &\leq &\frac{\widetilde{\vartheta }-\widetilde{\phi }}{2}\leq \frac{\pi }{2%
},
\end{eqnarray*}%
so that we get%
\begin{eqnarray*}
\cos \widetilde{\phi }-\cos \widetilde{\vartheta } &\geq &2\left( \widetilde{%
\vartheta }^{2}-\widetilde{\phi }^{2}\right) \frac{\sin \left( \frac{%
\widetilde{\vartheta }-\widetilde{\phi }}{2}\right) \sin \left( \frac{%
\widetilde{\vartheta }+\widetilde{\phi }}{2}\right) }{\left( \frac{%
\widetilde{\vartheta }-\widetilde{\phi }}{2}\right) \left( \frac{\widetilde{%
\vartheta }+\widetilde{\phi }}{2}\right) } \\
&\geq &C_{II}\left( \widetilde{\vartheta }^{2}-\widetilde{\phi }^{2}\right), 
\end{eqnarray*}%
where $C_{II}>0$.
By substitution in \eqref{intlast}, following the same procedure
as above yields
\begin{eqnarray*}
\left\vert \Psi _{\varepsilon ;s}\left( \vartheta \right) \right\vert &\leq &%
\frac{\widetilde{C}_{2s+1}^{\prime \prime }}{\varepsilon ^{2}}\left\vert
\int_{0}^{\widetilde{\vartheta }}\frac{e^{-\left( \frac{\pi -\widetilde{\phi 
}}{2\varepsilon }\right) ^{2}}H_{2s+1}\left( \frac{\pi -\widetilde{\phi }}{%
2\varepsilon }\right) }{\sqrt{\left( \widetilde{\vartheta }^{2}-\widetilde{%
\phi }^{2}\right) }}d\widetilde{\phi }\right\vert \\
&\leq &\frac{\widetilde{C}_{2s+1}^{\prime \prime }}{\varepsilon ^{2}}%
\left\vert \left( 2s+1\right) !\sum_{r=0}^{s}\frac{\left( -1\right) ^{s-r}}{%
\left( 2r+1\right) !\left( s-r\right) !}\widetilde{Q}_{\varepsilon ,r}\left(
\vartheta \right) \right\vert ,
\end{eqnarray*}
where $\widetilde{C}_{2s+1}^{\prime \prime
}>0$ and 
\begin{equation*}
\widetilde{Q}_{\varepsilon ,r}\left( \vartheta \right) :=\int_{0}^{%
\widetilde{\vartheta }}\frac{e^{-\left( \frac{\widetilde{\phi }}{%
2\varepsilon }\right) ^{2}}\left( \frac{\pi -\widetilde{\phi }}{2\varepsilon 
}\right) ^{2r+1}}{\sqrt{\left( \widetilde{\vartheta }^{2}-\widetilde{\phi }%
^{2}\right) }}d\widetilde{\phi }.
\end{equation*}%
Finally, using Lemma \ref{prop:qu} leads to
\begin{equation*}
\widetilde{Q}_{\varepsilon ,r}\left( \vartheta \right) \leq \widetilde{C}%
_{r}\left( \frac{\widetilde{\vartheta }}{2\varepsilon }\right)
^{2r}e^{-\left( \frac{\widetilde{\vartheta }}{2\varepsilon }\right) ^{2}},
\end{equation*}%
and, as a straightforward consequence, we get%
\begin{equation*}
\left\vert \Psi _{\varepsilon ;s}\left( \vartheta \right) \right\vert \leq 
\frac{C_{s}^{\prime \prime }}{\varepsilon ^{2}}e^{-\left( \frac{\vartheta }{%
2\varepsilon }\right) ^{2}}\left\vert H_{2s}\left( \frac{\vartheta }{%
2\varepsilon }\right) \right\vert \text{ .}
\end{equation*}%
\noindent \textit{Case III. }Following again \cite{npw1}, we have that
\begin{eqnarray*}
\Lambda _{\varepsilon ;s}\left( \phi \right) &\leq &\frac{1}{\varepsilon ^{2}%
}\sum_{\ell\geq 0}f_{s}\left( \varepsilon \left( \ell+\frac{1}{2}\right) \right)
\varepsilon ^{2}\left( \ell+\frac{1}{2}\right) \\
&\leq &\frac{C_{III}}{\varepsilon ^{2}}\sum_{\ell\geq 0}\left( \varepsilon
\ell\right) ^{2s+1}e^{-\left( \varepsilon \ell\right) ^{2}}\int_{\varepsilon
\ell}^{e\left( \ell+1\right) }du \\
&\leq &\frac{C_{III}}{\varepsilon ^{2}}\int_{0}^{\infty }u^{2s+1}e^{-u^{2}}du
\\
&=&\frac{C_{III}}{\varepsilon ^{2}}\frac{\Gamma \left( s+\frac{3}{2}\right) 
}{2}=\frac{C_{s}^{\prime \prime \prime }}{\varepsilon ^{2}}\text{ .}
\end{eqnarray*}%
Combining these results yields 
\begin{equation*}
\left\vert \Psi _{\varepsilon ;s}\left( \vartheta \right) \right\vert <C_{s}%
\frac{e^{-\left( \frac{\vartheta }{2\varepsilon }\right) ^{2}}}{\varepsilon
^{2}}\left( 1+\left\vert H_{2s}\left( \frac{\vartheta }{\varepsilon }\right)
\right\vert \right),
\end{equation*}%
for $\vartheta =\left[ 0,\pi \right]$. From Remark \ref{relation} and since $\lambda _{jk}\leq cB^{-2j}$ , we
have that
\begin{equation*}
\left\vert \Psi _{jk;s}\left( x\right) \right\vert \leq C_{s}B^{j}e^{-\frac{%
B^{2j}}{4}\vartheta ^{2}\left( x\right) }\left( 1+\left\vert B^{j}\vartheta
\left( x\right) \right\vert ^{2s}\right),
\end{equation*}
as claimed.
\end{proof}
\begin{remark}
In view of Remark \ref{remark:equivalence}, it holds
\begin{equation*}
\left\vert \psi _{jk;s}\left( x\right) \right\vert \leq C_{s}B^{j}e^{-\frac{%
		B^{2j}}{4}\vartheta ^{2}\left( x\right) }\left( 1+\left\vert B^{j}\vartheta
\left( x\right) \right\vert ^{2s}\right).
\end{equation*}
\end{remark}
\begin{remark}\label{scodellerlink}
		As suggested in \cite{scodeller}, Mexican needlets in the case $s=1$ provide a valid asymptotic approximation to the spherical Mexican hat wavelets . Recall that the discretized version of spherical Mexican hat wavelets use a stereographic projection on the sphere (see \cite{ant}). These wavelets conserve the most crucial properties of the flat Mexican hat wavelets and, for this reason, they are widely used in Astrophysics and Cosmology, even if they lack of a reconstruction formula (see, for example, \cite{mcewen}, for a quick review). In \cite{scodeller}, it is proved that the bound between the absolute value of the difference between spherical Mexican hat wavelets and Mexican needlets is of order $B^{-j} \min\left( \vartheta^4B^{4j},1\right)$. This bound matches exactly with the results proved in Theorem \ref{theo:loc}. Indeed, fixed $s=1$, the bound for small angles is controlled by $B^{-j}$, which depends, on one hand, on the normalization factor of the spherical Mexican hat wavelet and, on the other, on $\sqrt{\lambda_{jk}}$. Up to a proper normalization, for larger angles, this factor has to be multiplied by a series expansion of even powers of $\vartheta$, controlled by leading term of order $4$ (cf. \cite{gm1}). Heuristically, it implies that spherical Mexican hat wavelets can be approximated to Mexican needlets in the corresponding $E_{jk}$ and that this approximation is better for large $j$.       
		For this reason, the spatial concentration properties here discussed and the correlation properties studied in \cite{lanmar2,mayeli} can be helpful for the study of the asymptotic behaviour of the random spherical Mexican wavelet coefficients hat in the high-frequency limit.          
\end{remark}
Before concluding this Section, as for the standard needlets (see \cite%
{npw1,npw2}), we can also establish the order of the $L^{p}$-norms of
Mexican needlets as follows.
\begin{corollary}[Bounds on $L^{p}\left( \mathbb{S}^{2}\right) $-norms]\label{coronorms}
For any $p\in \left[ 1,\infty \right) \,$, there exist $c_{p},C_{p}\in 
\mathbb{R}$ such that%
\begin{equation*}
c_{p}B^{2j\left( \frac{1}{2}-\frac{1}{p}\right) }\leq \left\Vert \Psi
_{jk;s}\right\Vert _{L^{p}\left( \mathbb{S}^{2}\right) }\leq
C_{p}B^{2j\left( \frac{1}{2}-\frac{1}{p}\right) }.
\end{equation*}%
Furthermore, there exist $c_{\infty },C_{\infty }\in \mathbb{R}$ such that 
\begin{equation*}
c_{\infty }B^{j}\leq \left\Vert \Psi _{jk;s}\right\Vert _{L^{\infty }\left( 
\mathbb{S}^{2}\right) }\leq C_{\infty }B^{j}.
\end{equation*}
\end{corollary}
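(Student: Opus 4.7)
The plan is to combine the pointwise bound from Theorem \ref{theo:loc} with integration in geodesic polar coordinates for the upper bound, and to obtain the lower bound by a direct positivity estimate at the centre $\xi_{jk}$ together with a local continuity argument.

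For the upper bound, the Remark following Theorem \ref{theo:loc} gives $|\psi_{jk;s}(x)|\le C_s B^{j} e^{-B^{2j}\vartheta^{2}/4}\bigl(1+(B^{j}\vartheta)^{2s}\bigr)$ where $\vartheta=d(x,\xi_{jk})$. Taking the supremum yields directly $\|\psi_{jk;s}\|_{L^{\infty}}\le C_{\infty}B^{j}$. For $p<\infty$, I would integrate in geodesic polar coordinates centred at $\xi_{jk}$, bound $\sin\vartheta\le\vartheta$, and substitute $u=B^{j}\vartheta$; the integral reduces to $C\,B^{jp}B^{-2j}\int_{0}^{\pi B^{j}}e^{-pu^{2}/4}(1+u^{2s})^{p}\,u\,du$, uniformly bounded in $j$ because the integrand has Gaussian decay. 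Taking $p$-th roots gives $\|\psi_{jk;s}\|_{L^{p}}\le C_{p}B^{2j(1/2-1/p)}$.

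For the lower bound, at $\vartheta=0$ every term in (\ref{mexican}) is nonnegative since $P_{l}(1)=1$, and writing $g(u):=u^{2s+1}e^{-u^{2}}$,
\[ \Psi_{\varepsilon;s}(0)=\frac{1}{2\pi\varepsilon^{2}}\sum_{l\ge 0}g\bigl(\varepsilon(l+\tfrac12)\bigr)\,\varepsilon \]
is $\varepsilon^{-2}/(2\pi)$ times a Riemann sum of step $\varepsilon=B^{-j}$ for the positive integral $\int_{0}^{\infty}g(u)\,du=\Gamma(s+1)/2$. Hence, for $j$ large enough, $\Psi_{\varepsilon;s}(0)\ge c_{s}B^{2j}$, and combining with $\lambda_{jk}\ge c'_{B}B^{-2j}$ yields $|\psi_{jk;s}(\xi_{jk})|\ge c\,B^{j}$, giving the $L^{\infty}$ lower bound. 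To pass to $L^{p}$ for $p<\infty$, I would propagate this pointwise bound to a geodesic disc of radius $\sim B^{-j}$: using $|P_{l}(\cos\vartheta)-1|\le l^{2}\vartheta^{2}/2$ one obtains, by the same Riemann-sum comparison, $|\Psi_{\varepsilon;s}(\vartheta)-\Psi_{\varepsilon;s}(0)|\le C'_{s}\vartheta^{2}B^{4j}$, so for $\vartheta\le\delta B^{-j}$ with $\delta$ sufficiently small this difference is at most $c_{s}B^{2j}/2$, whence $|\psi_{jk;s}(x)|\ge (c/2)B^{j}$ on a disc of area $\gtrsim B^{-2j}$. Integrating yields $\|\psi_{jk;s}\|_{L^{p}}^{p}\gtrsim B^{-2j}\cdot B^{jp}$, i.e.\ $\|\psi_{jk;s}\|_{L^{p}}\ge c_{p}B^{2j(1/2-1/p)}$.

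The main obstacle is this last continuity step: one has to verify that the positivity of $\Psi_{\varepsilon;s}$ at $\vartheta=0$ is preserved on the natural length-scale $\vartheta\sim B^{-j}$, and in particular that the constant $C'_{s}$ in the derivative estimate is genuinely independent of $j$. Once this modulus-of-continuity bound is in place, everything else reduces to the standard Gaussian integral and Riemann-sum comparisons already implicit in the calculations above.
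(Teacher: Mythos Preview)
Your argument is correct, and for the upper bounds it coincides with the paper's (localization plus polar-coordinate integration, exactly as in the standard needlet literature). For the lower bounds, however, the paper proceeds differently: it computes $\|\psi_{jk;s}\|_{L^{2}}^{2}$ \emph{exactly} via the orthogonality of spherical harmonics,
\[
\|\psi_{jk;s}\|_{L^{2}(S^{2})}^{2}
=\lambda_{jk}\sum_{l\ge 0}f_{s}^{2}\!\left(\tfrac{l}{B^{j}}\right)\tfrac{2l+1}{4\pi},
\]
and bounds this above and below by the same Riemann-sum comparison you use at $\vartheta=0$. From there it invokes the standard machinery of \cite{npw2}: the remaining $L^{p}$ lower bounds follow from the $L^{2}$ lower bound combined with the $L^{\infty}$ (or $L^{1}$) \emph{upper} bound via H\"older's inequality, e.g.\ $\|\psi\|_{L^{2}}^{2}\le\|\psi\|_{L^{\infty}}^{2-p}\|\psi\|_{L^{p}}^{p}$ for $1\le p\le 2$, and an analogous interpolation for $p\ge 2$.

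Your route---positivity at the centre plus a modulus-of-continuity bound on a disc of radius $\sim B^{-j}$---is a genuine alternative. The concern you flag is not an obstacle: the constant $C'_{s}$ in your derivative estimate is indeed $j$-independent, since $\sum_{l}( \varepsilon l)^{2s+3}e^{-(\varepsilon l)^{2}}\varepsilon$ is a Riemann sum for a fixed Gaussian moment $\int_{0}^{\infty}u^{2s+3}e^{-u^{2}}\,du$. The trade-off is that the paper's orthogonality identity is a one-line computation and the subsequent H\"older step is completely soft, whereas your continuity argument is more hands-on but has the virtue of being self-contained (no black-box appeal to \cite{npw2}) and of giving an explicit geometric picture of where the $L^{p}$ mass lives.
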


\begin{proof}
The proof of this Corollary is very close to the one developed in the
standard needlet framework in \cite{npw2}. The only remarkable difference
concerns the estimate of the bounds for $L^{2}\left( \mathbb{S}^{2}\right) 
$ norms. In \cite{npw2}, this bound is proven as corollary of the
tight-frame property. We establish a similar result for the Mexican needlet
framework as follows. Let $dx$ denote the uniform spherical measure. Hence, we have that
\begin{align*}
 \left\Vert \Psi _{jk;s}\right\Vert _{L^{2}\left( \mathbb{S}^{2}\right)
}^{2}=&\int_{\mathbb{S}^{2}}\left\vert \Psi _{jk;s}\left( x\right)
\right\vert ^{2}dx \\ 
=&\lambda _{jk}\int_{\mathbb{S}^{2}}\sum_{\ell=0}^{\infty }\sum_{\ell^{\prime
}=0}^{\infty }f_{s}\left( \frac{\ell}{B^{j}}\right) f_{s}\left( \frac{\ell^{\prime
}}{B^{j}}\right) \\
&\times \sum_{m=-\ell}^{\ell}\sum_{m^{\prime }=-\ell^{\prime}}^{\ell^{\prime}}Y_{\ell m}\left( x\right) 
\overline{Y}_{\ell^{\prime }m^{\prime }}\left( x\right) \overline{Y}_{\ell m}\left(
\xi _{jk}\right) Y_{\ell^{\prime }m^{\prime }}\left( \xi _{jk}\right) dx \\
=&\lambda _{jk}\sum_{\ell=0}^{\infty }f_{s}^{2}\left( \frac{\ell}{B^{j}}\right)
\sum_{m=-\ell}^{\ell}\overline{Y}_{\ell m}\left( \xi _{jk}\right) Y_{\ell m}\left( \xi
_{jk}\right) \delta _{\ell}^{\ell^{\prime }}\delta _{m}^{m^{\prime }} \\
 =&\lambda _{jk}\sum_{\ell=0}^{\infty }f_{s}^{2}\left( \frac{\ell}{B^{j}}\right) 
\frac{2\ell+1}{4\pi }.
\end{align*}%
On one hand, we get
\begin{align*}
\lambda _{jk}\sum_{\ell=0}^{\infty }f_{s}^{2}\left( \frac{\ell}{B^{j}}\right) 
\frac{2\ell+1}{4\pi } \leq &\frac{c}{2\pi }\frac{1}{B^{j}}\sum_{\ell=0}^{\infty }f_{s}^{2}\left( 
\frac{\ell}{B^{j}}\right) \frac{\ell+1/2}{B^{j}} \\
&=\frac{c}{2\pi }\frac{1}{B^{j}}\sum_{\ell=0}^{\infty }\left( \frac{\ell}{B^{j}}%
\right) ^{4s} e^{  -2\left( \frac{\ell}{B^{j}}\right) ^{2}} \frac{%
\ell+1/2}{B^{j}} \\
&\leq \frac{c}{2\pi }\sum_{\ell=0}^{\infty }\left( \frac{\ell}{B^{j}}\right)
^{4s} e^{ -2\left( \frac{\ell}{B^{j}}\right) ^{2} }\frac{\ell}{B^{j}}%
\int_{\frac{\ell}{B^{j}}}^{\frac{\ell+1}{B^{j}}}du \\
&\leq\frac{c}{2\pi }\int_{0}^{\infty }u^{4s+1} e^{ -2u^{2}}
du\leq C_{2}.
\end{align*}%
On the other hand, we obtain 
\begin{equation*}
\lambda _{jk}\sum_{\ell=0}^{\infty }f_{s}^{2}\left( \frac{\ell}{B^{j}}\right) 
\frac{2\ell+1}{4\pi }\geq c_{2}.
\end{equation*}%
Following \cite{npw2}, we get 
\begin{equation*}
c_{p}B^{2j\left( \frac{1}{2}-\frac{1}{p}\right) }\leq \left\Vert \Psi
_{jk;s}\right\Vert _{L^{p}\left( \mathbb{S}^{2}\right) }\leq
C_{p}B^{2j\left( \frac{1}{2}-\frac{1}{p}\right) },
\end{equation*}
as claimed.
\end{proof}

\section{Auxiliary results\label{sec:aux}}

In this Section we collect some auxiliary results, concerning the
upper bounds of $\Lambda _{\varepsilon ;s}$, $Q_{\varepsilon ,r}$ and $%
\widetilde{Q}_{\varepsilon ,r}$.\\We introduce preliminarily the following notation for the Fourier transform of a function $f\in
L^{1}\left( \mathbb{R}\right) $: 
\begin{equation*}
F\left[ f\right] \left( \omega \right) :=\int_{\mathbb{R}}f\left( u\right)
e^{-i\omega u}du=:\widehat{f}\left( \omega \right) \text{ .}
\end{equation*}%
Let us also recall two standard properties for the Fourier transforms. Under
standard conditions, we have that
\begin{align*}
&\frac{d^{\alpha }}{d\omega ^{\alpha }}\widehat{f}\left( \omega \right)
=\left( -i\right) ^{\alpha }F\left[ u^{\alpha }f\left( u\right) \right]
\left( \omega \right); \label{prop_der1} \\
&F\left[ \frac{d^{\alpha }}{du^{\alpha }}f\left( u\right) \right] \left(
\omega \right) =\left( -i\right) ^{\alpha }\omega ^{\alpha }F\left[
f\left( u\right) \right] \left( \omega \right).  
\end{align*}%
Finally, the Poisson Summation Formula can be defined as follows. If, for $\omega \in %
\left[ 0,2\pi \right] $ and $\alpha >0$, 
\begin{equation*}
\left\vert f\left( u\right) \right\vert +\left\vert \widehat{f}\left( \omega
\right) \right\vert \leq \frac{C_{a}}{1+\left\vert u\right\vert ^{\alpha +1}},
\end{equation*}%
then:%
\begin{equation}
\sum_{\tau =-\infty }^{\infty }f\left( \tau \right) e^{-i\omega \tau
}=\sum_{\nu =-\infty }^{\infty }\widehat{f}\left( \omega +2\pi \nu \right) 
\text{ .}  \label{prop_psf1}
\end{equation}%
For more details and discussions about Fourier transforms, the reader is referred, for
instance, to the textbook \cite{steinweiss}.

\begin{lemma}
\label{lemma_kappa}Let $\Lambda _{\varepsilon ;s}\left( \phi \right) $ be
given by (\ref{Kappa}). Then there exists $\widetilde{C}_{2s+1}>0$ such
that 
\begin{equation*}
\Lambda _{\varepsilon ;s}\left( \phi \right) \leq \frac{\widetilde{C}_{2s+1}%
}{\varepsilon ^{2}}e^{-\left( \frac{\phi }{2\varepsilon }\right)
^{2}}\left\vert H_{2s+1}\left( \frac{\phi }{2\varepsilon }\right)
\right\vert \text{ .}
\end{equation*}
\end{lemma}

\begin{proof} First, note that straightforward calculations lead to
 \begin{equation*}
F\left[ g_{\varepsilon ,\phi ;s}\left( u+\frac{1}{2}\right) \right] \left(
\omega \right) =e^{i\frac{\omega }{2}}\widehat{g}_{\varepsilon ,\phi
;s}\left( \omega \right) .
\end{equation*}%
On one hand, we have that
\begin{eqnarray*}
F\left[ f_{s}\left( \varepsilon u\right) u\right] \left( \omega \right)
&=&\int_{\mathbb{R}}f_{s}\left( \varepsilon u\right) ue^{-i\omega u} \\
&=&\frac{1}{\varepsilon ^{2}}F\left[ f_{s}\left( u\right) u\right] \left( 
\frac{\omega }{\varepsilon }\right).
\end{eqnarray*}%
On the other hand, note that%
\begin{equation}\label{gausstransf}
F\left[ e^{-u^{2}}\right] \left( \omega \right) =\sqrt{\pi }e^{-\frac{\omega
^{2}}{4}}.
\end{equation}%
Combining \eqref{weight} and \eqref{gausstransf} yields  
\begin{eqnarray*}
F\left[ f_{s}\left( u\right) u\right] \left( \omega \right) &=&i^{2s+1}\frac{%
d^{2s+1}}{d\omega ^{2s+1}}F\left[ e^{-u^{2}}\right] \left( \omega \right) \\
&=&i^{2s+1}\sqrt{\pi }\frac{d^{2s+1}}{d\omega ^{2s+1}}e^{-\frac{\omega ^{2}}{%
4}} \\
&=&\left( -1\right) ^{\left( s+\frac{1}{2}\right) }\sqrt{\pi }H_{2s+1}\left( 
\frac{\omega }{2}\right) e^{-\frac{\omega ^{2}}{4}}\text{ ,}
\end{eqnarray*}%
where $H_{2s+1}\left( \cdot \right) $ is the Hermite polynomial of order $%
2s+1$. Recall that the polynomials composing $H_{n}\left( \cdot \right) $
are all even (odd) if $n$ is even (odd) - for more details, see, for instance, 
\cite[Chapter 22]{abramostegun}. Collecting all these results, we get%
\begin{equation*}
F\left[ f_{s}\left( \varepsilon u\right) u\right] \left( \omega \right) =%
\frac{\left( -1\right) ^{\left( s+\frac{1}{2}\right) }\sqrt{\pi }}{%
\varepsilon ^{2}}H_{2s+1}\left( \frac{\omega }{2\varepsilon }\right)
e^{-\left( \frac{\omega }{2\varepsilon }\right) ^{2}}.
\label{trans2}
\end{equation*}%
Hence, we obtain:%
\begin{align*}
\widehat{g}_{\varepsilon ,\phi ;s}\left( \omega \right) =&F\left[ \sin
\left( \phi u\right) \right] \left( \omega \right) \ast F\left[ f_{s}\left(
\varepsilon u\right) u\right] \left( \omega \right) \\
=&\frac{\left( -1\right) ^{s}\pi ^{\frac{3}{2}}}{\varepsilon ^{2}}\left(\!
H_{2s+1}\left( \frac{\omega -\phi }{2\varepsilon }\right) e^{-\left( \frac{%
\omega -\phi }{2\varepsilon }\right) ^{2}} \!-\!H_{2s+1}\left( \frac{\omega +\phi 
}{2\varepsilon }\right) e^{-\left( \frac{\omega +\phi }{2\varepsilon }%
\right) ^{2}}\right).
\end{align*}%
Using (\ref{prop_psf1}), we have that%
\begin{align*}
\sum_{\ell=-\infty }^{\infty }g_{\varepsilon ,\phi ;s}\left( \ell+\frac{1}{2}%
\right) =&\sum_{\nu =-\infty }^{\infty }e^{i\frac{2\pi \nu }{2}}\widehat{g}%
_{\varepsilon ,\phi ;s}\left( 2\pi \nu \right)\\
=&\sum_{\nu =-\infty }^{\infty }e^{i\frac{2\pi \nu }{2}}\frac{\left(
-1\right) ^{s}\pi ^{\frac{3}{2}}}{\varepsilon ^{2}}\left( H_{2s+1}\left( 
\frac{2\pi \nu -\phi }{2\varepsilon }\right) e^{-\left( \frac{2\pi \nu -\phi 
}{2\varepsilon }\right) ^{2}}\right. \\
&-\left. H_{2s+1}\left( \frac{2\pi \nu +\phi }{2\varepsilon }\right)
e^{-\left( \frac{2\pi \nu +\phi }{2\varepsilon }\right) ^{2}}\right) \\
=&2\sum_{\nu =-\infty }^{\infty }e^{i\frac{2\pi \nu }{2}}\frac{\left(
-1\right) ^{s+1}\pi ^{\frac{3}{2}}}{\varepsilon ^{2}}H_{2s+1}\left( \frac{%
2\pi \nu +\phi }{2\varepsilon }\right) e^{-\left( \frac{2\pi \nu +\phi }{%
2\varepsilon }\right) ^{2}},
\end{align*}%
where the last equality takes into account that $H_{2s+1}\left( \cdot \right) $
is odd. Therefore, we have that 
\begin{equation*}
\Lambda _{\varepsilon ;s}\left( \phi \right) =\frac{\left( -1\right)
^{s+1}\pi ^{\frac{3}{2}}}{\varepsilon ^{2}}\sum_{\nu =-\infty }^{\infty }e^{i%
\frac{2\pi \nu }{2}}H_{2s+1}\left( \frac{2\pi \nu +\phi }{2\varepsilon }%
\right) e^{-\left( \frac{2\pi \nu +\phi }{2\varepsilon }\right) ^{2}}.
\end{equation*}%
Then, we obtain
\begin{equation*}
\left\vert \Lambda _{\varepsilon ;s}\left( \phi \right) \right\vert =\frac{%
\pi ^{\frac{3}{2}}}{\varepsilon ^{2}}\left\vert \sum_{\nu =-\infty }^{\infty
}H_{2s+1}\left( \frac{2\pi \nu +\phi }{2\varepsilon }\right) e^{-\left( 
\frac{2\pi \nu +\phi }{2\varepsilon }\right) ^{2}}\right\vert \text{ } 
\leq\frac{\pi ^{\frac{3}{2}}}{\varepsilon ^{2}}V_{\varepsilon ,s}\left(
\phi \right) ,
\end{equation*}%
where%
\begin{equation*}
V_{\varepsilon ,s}\left( \phi \right) =\sum_{\nu =-\infty }^{\infty
}\left\vert H_{2s+1}\left( \frac{2\pi \nu +\phi }{2\varepsilon }\right)
e^{-\left( \frac{2\pi \nu +\phi }{2\varepsilon }\right) ^{2}}\right\vert 
.
\end{equation*}%
Note that 
\begin{equation}
V_{\varepsilon ,s}\left( \phi \right) =\left\vert H_{2s+1}\left( \frac{\phi 
}{2\varepsilon }\right) \right\vert e^{-\left( \frac{\phi }{2\varepsilon }%
\right) ^{2}}+V_{+}+V_{-}\text{ ,}  \label{Vtot}
\end{equation}%
where%
\begin{align*}
V_{+} =&\sum_{\nu =1}^{\infty }\left\vert H_{2s+1}\left( \frac{2\pi \nu
+\phi }{2\varepsilon }\right) \right\vert e^{-\left( \frac{2\pi \nu +\phi }{%
2\varepsilon }\right) ^{2}}, \\
V_{-} =&\sum_{\nu =-1}^{-\infty }\left\vert H_{2s+1}\left( \frac{2\pi \nu
+\phi }{2\varepsilon }\right) \right\vert e^{-\left( \frac{2\pi \nu +\phi }{%
2\varepsilon }\right) ^{2}}.
\end{align*}%
Using (\ref{hermiteodd}), for $\left\vert u\right\vert >1$, we have that 
\begin{eqnarray}
\left\vert H_{n}\left( u\right) \right\vert &\leq &n!\sum_{k=0}^{\frac{n-1}{2%
}}\left\vert \frac{\left( -1\right) ^{\frac{n-1}{2}-k}}{\left( 2k+1\right)
!\left( \frac{n-1}{2}-k\right) !}\left( 2u\right) ^{2k+1}\right\vert  \notag
\\
&\leq &C_{n}^{\prime }\left\vert u\right\vert ^{n}. \label{H_lim}
\end{eqnarray}%
Hence, we get%
\begin{align*}
\left\vert H_{2s+1}\!\left( \!\frac{2\pi \nu +\phi }{2\varepsilon }\!\right)\!
\right\vert\! e^{-\left( \frac{2\pi \nu +\phi }{2\varepsilon }\right) ^{2}}\!
\leq &C_{2s+1}\left\vert \frac{2\pi \nu +\phi }{2\varepsilon }\right\vert
^{2s+1}e^{-\left( \frac{2\pi \nu +\phi }{2\varepsilon }\right) ^{2}} \\
=&C_{2s+1}\left( \frac{2\pi \nu +\phi }{2\varepsilon }\right)
^{2s+1}e^{-\left( \frac{\phi }{2\varepsilon }\right) ^{2}}e^{-\left( \frac{%
\pi \nu }{\varepsilon }\right) ^{2}}e^{-\frac{2\pi \nu \phi }{4\varepsilon
^{2}}} \\
=&e^{-\left( \frac{\phi }{2\varepsilon }\right) ^{2}}C_{2s+1}\left[ \left( 
\frac{\pi \nu }{\varepsilon }+\frac{\phi }{2\varepsilon }\right)
^{2s+1}e^{-\left( \frac{\pi \nu }{\varepsilon }\right) ^{2}}e^{-\frac{2\pi
\nu \phi }{4\varepsilon ^{2}}}\right] \\
\leq &e^{-\left( \frac{\phi }{2\varepsilon }\right) ^{2}}C_{2s+1}\left[
\left( \frac{\pi \nu }{\varepsilon }+\frac{\pi }{2\varepsilon }\right)
^{2\left( s+1\right) }e^{-\left( \frac{\pi \nu }{\varepsilon }\right) ^{2}}%
\right].
\end{align*}%
Note that%
\begin{equation*}
\frac{\pi u}{\varepsilon }+\frac{\pi }{2\varepsilon }<\frac{2\pi u}{%
\varepsilon };
\end{equation*}%
furthermore, observing that $e^{-\nu \phi }<1$, we obtain
\begin{align}
V_{+} \leq &C_{2s+1}\sum_{\nu =1}^{\infty }\left\vert H_{2\left( s+1\right)
}\left( \frac{2\pi \nu +\phi }{2\varepsilon }\right) e^{-\left( \frac{2\pi
\nu +\phi }{2\varepsilon }\right) ^{2}}\right\vert  \notag \\
\leq &e^{-\left( \frac{\phi }{2\varepsilon }\right) ^{2}}C_{2s+1}\sum_{\nu
=1}^{\infty }\left( \frac{\pi \nu }{\varepsilon }+\frac{\pi }{2\varepsilon }%
\right) ^{2s+1}e^{-\left( \frac{\pi \nu }{\varepsilon }\right) ^{2}}  \notag
\\
\leq &e^{-\left( \frac{\phi }{2\varepsilon }\right)
^{2}}C_{2s+1}2^{2s+1}\sum_{\nu =1}^{\infty }\left( \frac{\pi \nu }{%
\varepsilon }\right) ^{2s+1}e^{-\left( \frac{\pi \nu }{\varepsilon }\right)
^{2}}  \notag \\
\leq &C_{2s+1}^{\prime }e^{-\left( \frac{\phi }{2\varepsilon }\right) ^{2}}\label{Vpiu} .
\end{align}%
Indeed, the series $\sum_{\nu =1}^{\infty }\left( \frac{\pi \nu }{%
\varepsilon }\right) ^{2s+1}e^{-\left( \frac{\pi \nu }{\varepsilon }\right)
^{2}}$ is convergent, as easily proved by means of the D'Alembert's
criterion, i.e.,  
\begin{equation*}
\lim_{\nu \rightarrow \infty }\frac{\left( \frac{\pi \left( \nu +1\right) }{%
\varepsilon }\right) ^{2s+1}e^{-\left( \frac{\pi \left( \nu +1\right) }{%
\varepsilon }\right) ^{2}}}{\left( \frac{\pi \nu }{\varepsilon }\right)
^{2s+1}e^{-\left( \frac{\pi \nu }{\varepsilon }\right) ^{2}}}\\
=\lim_{\nu \rightarrow \infty }\left( 1+\frac{1}{v}\right) ^{2s+1}\exp
\left( -\frac{\pi ^{2}}{\varepsilon ^{2}}\left( 2\nu +1\right) \right) =0,
\end{equation*}%
for all $\nu >1$. On the other hand, if $\left\vert u\right\vert \leq 1$, we
obtain%
\begin{equation*}
\left\vert H_{n}\left( u\right) \right\vert \leq n!\sum_{k=0}^{\frac{n-1}{2%
}}\left\vert \frac{1}{\left( 2k+1\right) !\left( \frac{n-1}{2}-k\right) !}%
2^{2k+1}\right\vert  
\leq C_{n}^{\prime }.\label{Hlim_bis}
\end{equation*}%
Hence, we have that 
\begin{align*}
\left\vert H_{2s+1}\left( \frac{2\pi \nu +\phi }{2\varepsilon }\right)
\right\vert e^{-\left( \frac{2\pi \nu +\phi }{2\varepsilon }\right) ^{2}}
\leq &C_{2s+1}^{\prime }e^{-\left( \frac{2\pi \nu +\phi }{2\varepsilon }%
\right) ^{2}} \\
=&C_{2s+1}^{\prime }e^{-\left( \frac{\phi }{2\varepsilon }\right)
^{2}}e^{-\left( \frac{\pi \nu }{\varepsilon }\right) ^{2}}e^{-\frac{2\pi \nu
\phi }{4\varepsilon ^{2}}} \\
=&e^{-\left( \frac{\phi }{2\varepsilon }\right) ^{2}}C_{2s+1}^{\prime }%
\left[ e^{-\left( \frac{\pi \nu }{\varepsilon }\right) ^{2}}e^{-\frac{2\pi
\nu \phi }{4\varepsilon ^{2}}}\right] \\
\leq &e^{-\left( \frac{\phi }{2\varepsilon }\right) ^{2}}C_{2s+1}^{\prime
}e^{-\left( \frac{\pi \nu }{\varepsilon }\right) ^{2}}.
\end{align*}%
Therefore, we obtain%
\begin{align*}
V_{+} \leq &C_{2s+1}\sum_{\nu =1}^{\infty }\left\vert H_{2\left( s+1\right)
}\left( \frac{2\pi \nu +\phi }{2\varepsilon }\right) e^{-\left( \frac{2\pi
\nu +\phi }{2\varepsilon }\right) ^{2}}\right\vert  \notag \\
\leq &e^{-\left( \frac{\phi }{2\varepsilon }\right) ^{2}}C_{2s+1}\sum_{\nu
=1}^{\infty }e^{-\left( \frac{\pi \nu }{\varepsilon }\right) ^{2}}  \notag \\
\leq &e^{-\left( \frac{\phi }{2\varepsilon }\right)
^{2}}C_{2s+1}2^{2s+1}\sum_{\nu =1}^{\infty }e^{-\left( \frac{\pi \nu }{%
\varepsilon }\right) ^{2}}  \notag \\
\leq &C_{2s+1}^{\prime }e^{-\left( \frac{\phi }{2\varepsilon }\right) ^{2}}%
, 
\end{align*}%
since the series $\sum_{\nu =1}^{\infty }e^{-\left( \frac{\pi \nu }{%
\varepsilon }\right) ^{2}}$ is convergent.\\
Consider now the sum $V_{-}$. Let us define $\nu ^{\prime }=-\nu $, so that%
\begin{equation*}
V_{-}=\sum_{\nu ^{\prime }=1}^{\infty }\left\vert H_{2s+1}\left( \frac{\phi
-2\pi \nu ^{\prime }}{2\varepsilon }\right) \right\vert e^{-\left( \frac{%
\phi -2\pi \nu ^{\prime }}{2\varepsilon }\right) ^{2}}.
\end{equation*}%
Using \eqref{H_lim} yields 
\begin{align*}
\left\vert \! H_{2s+1}\left( \!\frac{\phi -2\pi \nu ^{\prime }}{2\varepsilon }%
\!\right) \!\right\vert e^{-\left( \frac{\phi -2\pi \nu ^{\prime }}{2\varepsilon 
}\right) ^{2}} \!\leq &C_{2s+1}\left\vert \frac{\phi -2\pi \nu ^{\prime }}{2\varepsilon }%
\right\vert ^{2s+1}e^{-\left( \frac{\phi -2\pi \nu ^{\prime }}{2\varepsilon }%
\right) ^{2}} \\
=&e^{-\left( \frac{\phi }{2\varepsilon }\right) ^{2}}C_{2s+1}\!\left[\!
\left\vert \frac{\phi -2\pi \nu ^{\prime }}{2\varepsilon }\right\vert
^{2s+1}\!\!e^{\frac{2\pi \nu ^{\prime }\phi }{4\varepsilon ^{2}}}e^{-\left( 
\frac{\pi \nu ^{\prime }}{\varepsilon }\right) ^{2}}\!\right] .
\end{align*}%
Since $\phi <\pi $, straightforward calculations lead to%
\begin{align}
V_{-} \leq &C_{2s+1}\sum_{\nu ^{\prime }=1}^{\infty }\left\vert
H_{2s+1}\left( \frac{\phi -2\pi \nu ^{\prime }}{2\varepsilon }\right)
\right\vert e^{-\left( \frac{\phi -2\pi \nu ^{\prime }}{2\varepsilon }%
\right) ^{2}}  \notag \\
\leq &e^{-\left( \frac{\phi }{2\varepsilon }\right) ^{2}}C_{2s+1}\sum_{\nu
^{\prime }=1}^{\infty }\left[ \left\vert \frac{\phi -2\pi \nu ^{\prime }}{%
2\varepsilon }\right\vert ^{2s+1}e^{\frac{2\pi \nu ^{\prime }\phi }{%
4\varepsilon ^{2}}}e^{-\left( \frac{\pi \nu ^{\prime }}{\varepsilon }\right)
^{2}}\right]  \notag \\
\leq &e^{-\left( \frac{\phi }{2\varepsilon }\right) ^{2}}C_{2s+1}\sum_{\nu
^{\prime }=1}^{\infty }\left[ \left\vert \frac{\pi -2\pi \nu ^{\prime }}{%
2\varepsilon }\right\vert ^{2s+1}e^{\frac{\pi ^{2}\nu ^{\prime }}{%
2\varepsilon ^{2}}}e^{-\frac{\pi ^{2}}{\varepsilon ^{2}}\left( \nu ^{\prime
}\right) ^{2}}\right]  \notag \\
\leq &e^{-\left( \frac{\phi }{2\varepsilon }\right) ^{2}}C_{2s+1}^{\prime
\prime }\sum_{\nu ^{\prime }=1}^{\infty }\left[ \left( \frac{\pi \nu
^{\prime }}{\varepsilon }\right) ^{2s+1}\exp \left( -\frac{\pi ^{2}}{%
\varepsilon ^{2}}\nu ^{\prime }\left[ \nu ^{\prime }-\frac{1}{2}\right]
\right) \right]  \notag \\
\leq &C_{2s+1}^{\prime \prime }e^{-\left( \frac{\phi }{2\varepsilon }%
\right) ^{2}}.  \label{Vmeno}
\end{align}%
Indeed, the series $\sum_{\nu ^{\prime }=1}^{\infty }\left[ \left( \frac{\pi
\nu ^{\prime }}{\varepsilon }\right) ^{2s+1}\exp \left( -\frac{\pi ^{2}}{%
\varepsilon ^{2}}\nu ^{\prime }\left[ \nu ^{\prime }-\frac{1}{2}\right]
\right) \right] $ can be proved to be convergent by means of the D'Alembert's criterion, as above.\\
\noindent Combining \eqref{Vpiu} and \eqref{Vmeno} in \eqref{Vtot}, the
term corresponding to $\nu =0$ is dominant. Hence, we have that 
\begin{align*}
V_{\varepsilon ,s}\left( \phi \right) \leq &e^{-\left( \frac{\phi }{%
2\varepsilon }\right) ^{2}}\left( \left\vert H_{2s+1}\left( \frac{\phi }{%
2\varepsilon }\right) \right\vert +C_{2s+1}^{\prime }+C_{2s+1}^{\prime
\prime }\right) \\
\leq &C_{2s+1}e^{-\left( \frac{\phi }{2\varepsilon }\right) ^{2}}\left\vert
H_{2s+1}\left( \frac{\phi }{2\varepsilon }\right) \right\vert .
\end{align*}%
Thus%
\begin{equation*}
\Lambda _{\varepsilon ;s}\left( \phi \right) \leq \frac{\widetilde{C}_{2s+1}%
}{\varepsilon ^{2}}e^{-\left( \frac{\phi }{2\varepsilon }\right)
^{2}}\left\vert H_{2s+1}\left( \frac{\phi }{2\varepsilon }\right)
\right\vert ,
\end{equation*}%
as claimed.
\end{proof}

\begin{lemma}
\label{prop:qu}For any $\vartheta \in \left[0,\pi \right]$, let $Q_{\varepsilon ,r}\left( \vartheta \right)$ and $\widetilde{Q}_{\varepsilon ,r}\left( \vartheta \right)$ be given by
\begin{align*}
Q_{\varepsilon ,r}\left( \vartheta \right) &:=\int_{\vartheta }^{\pi }\frac{%
e^{-\left( \frac{\phi }{2\varepsilon }\right) ^{2}}\left( \frac{\phi }{%
2\varepsilon }\right) ^{2r+1}}{\sqrt{\left( \phi ^{2}-\vartheta ^{2}\right) }%
}d\phi , \\
\widetilde{Q}_{\varepsilon ,r}\left( \vartheta \right) &:=\int_{0}^{%
\widetilde{\vartheta }}\frac{e^{-\left( \frac{\widetilde{\phi }}{%
2\varepsilon }\right) ^{2}}\left( \frac{\pi -\widetilde{\phi }}{2\varepsilon 
}\right) ^{2r+1}}{\sqrt{\left( \widetilde{\vartheta }^{2}-\widetilde{\phi }%
^{2}\right) }}d\widetilde{\phi }.
\end{align*}%
Then, there exist $C_{r},\widetilde{C}_{r}>0$ so that%
\begin{eqnarray*}
Q_{\varepsilon ,r}\left( \vartheta \right) &\leq &C_{r}e^{-\left( \frac{%
\vartheta }{2\varepsilon }\right) ^{2}}\left( \frac{\vartheta }{2\varepsilon 
}\right) ^{2r}, \\
\widetilde{Q}_{\varepsilon ,r}\left( \vartheta \right) &\leq &\widetilde{C}%
_{r}\left( \frac{\widetilde{\vartheta }}{2\varepsilon }\right)
^{2r}e^{-\left( \frac{\widetilde{\vartheta }}{2\varepsilon }\right) ^{2}}.
\end{eqnarray*}
\end{lemma}

\begin{proof}
First, note that 
\begin{equation*}
Q_{\varepsilon ,r}\left( \vartheta \right) =\left( \frac{\vartheta }{%
2\varepsilon }\right) ^{2r+1}\int_{\vartheta }^{\pi }\frac{e^{-\left( \frac{%
\vartheta }{2\varepsilon }\cdot \frac{\phi }{\vartheta }\right) ^{2}}\left( 
\frac{\phi }{\vartheta }\right) ^{2r+1}}{\sqrt{\left( \frac{\phi }{\vartheta 
}\right) ^{2}-1}}\frac{1}{\vartheta }d\phi \text{ .}
\end{equation*}%
Then, we use the substitution 
\begin{equation*}
t=\left( \left( \phi /\vartheta \right)
^{2}-1\right) ^{\frac{1}{2}},
\end{equation*} 
in order to obtain%
\begin{align*}
Q_{\varepsilon ,r}\left( \vartheta \right) =&e^{-\left( \frac{\vartheta }{%
2\varepsilon }\right) ^{2}}\left( \frac{\vartheta }{2\varepsilon }\right)
^{2r+1}\int_{0}^{\left( \left( \frac{\pi }{\vartheta }\right) ^{2}-1\right)
^{\frac{1}{2}}}e^{-\left( \frac{\vartheta }{2\varepsilon }\right)
^{2}t^{2}}\left( t^{2}+1\right) ^{r}dt \\
\leq &e^{-\left( \frac{\vartheta }{2\varepsilon }\right) ^{2}}\left( \frac{%
\vartheta }{2\varepsilon }\right) ^{2r+1}\int_{0}^{\infty }e^{-\left( \frac{%
\vartheta }{2\varepsilon }\right) ^{2}t^{2}}\left( t^{2}+1\right) ^{r}dt \\
=&e^{-\left( \frac{\vartheta }{2\varepsilon }\right) ^{2}}\left( \frac{%
\vartheta }{2\varepsilon }\right) ^{2r+1}\left( I_{1}+I_{2}\right) ,
\end{align*}%
where 
\begin{align*}
I_{1} &:=\int_{0}^{1}e^{-\left( \frac{\vartheta }{2\varepsilon }\right)
^{2}t^{2}}\left( t^{2}+1\right) ^{r}dt, \\
I_{2} &:=\int_{1}^{\infty }e^{-\left( \frac{\vartheta }{2\varepsilon }%
\right) ^{2}t^{2}}\left( t^{2}+1\right) ^{r}dt.
\end{align*}%
On one hand, for $t\in \left[ 0,1\right]$, we have that 
\begin{equation*}
\left( t^{2}+1\right)
^{r}\leq 2^{r}.
\end{equation*} 
On the other hand, for $t\in \left( 1,\infty \right) $, we obtain
\begin{equation*} \left(
t^{2}+1\right) ^{r}\leq \left( 2t\right) ^{2r}.
\end{equation*} 
Hence, we get 
\begin{align*}
I_{1} \leq &2^{r}\int_{0}^{1}e^{-\left( \frac{\vartheta }{2\varepsilon }%
\right) ^{2}t^{2}}dt\leq 2^{r}\int_{0}^{\infty }e^{-\left( \frac{\vartheta }{%
2\varepsilon }\right) ^{2}t^{2}}dt, \\
I_{2} \leq &\int_{1}^{\infty }e^{-\left( \frac{\vartheta }{2\varepsilon }%
\right) ^{2}t^{2}}\left( 2t\right) ^{2r}dt\leq 4^{r}\int_{0}^{\infty
}e^{-\left( \frac{\vartheta }{2\varepsilon }\right) ^{2}t^{2}}t^{2r}dt.
\end{align*}%
Straightforward calculations lead to%
\begin{align*}
I_{1}\leq &2^{r-1}\sqrt{\pi }\left( \frac{\vartheta }{2\varepsilon }\right)
^{-1},\\
I_{2}\leq &\left( \frac{\vartheta }{2\varepsilon }\right) ^{-\left(
2r+1\right) }\frac{\Gamma \left( r-\frac{1}{2}\right) }{2}.
\end{align*}%
Therefore, we obtain 
\begin{align*}
Q_{\varepsilon ,r}\left( \vartheta \right) \leq &C_{r}^{^{\prime \prime
}}e^{-\left( \frac{\vartheta }{2\varepsilon }\right) ^{2}}\left( \frac{%
\vartheta }{2\varepsilon }\right) ^{2r}\left( 1+\left( \frac{\vartheta }{%
2\varepsilon }\right) ^{-2r}\right) \\
\leq &C_{r}e^{-\left( \frac{\vartheta }{2\varepsilon }\right) ^{2}}\left( 
\frac{\vartheta }{2\varepsilon }\right) ^{2r},
\end{align*}
as claimed.\\
As far as $\widetilde{Q}_{\varepsilon ,r}\left( \vartheta \right)$ is concerned, note that the following inequality holds
\begin{align*}
\exp \left[ -\left( \frac{\pi -\widetilde{\phi }}{2\varepsilon }\right)
^{2}+\left( \frac{\widetilde{\phi }}{2\varepsilon }\right) ^{2}\right]
=&\exp \left[ -\left( \frac{\pi }{2\varepsilon }\right) ^{2}+2\frac{\pi 
\widetilde{\phi }}{2\varepsilon }\right] \\
\leq &\exp \left[ -\frac{\pi ^{2}}{2\varepsilon }\left( \frac{1}{%
2\varepsilon }-1\right) \right].
\end{align*}%
Then, let the function $\gamma \left( \cdot \right) $ on $%
\mathbb{R}$ be given by%
\begin{equation*}
\gamma \left( u\right) :=\exp \left[ -\frac{\pi ^{2}}{2u}\left( \frac{1}{2u}%
-1\right) \right].
\end{equation*}%
Note that $\gamma \left( \cdot \right)$ achieves its absolute maximum for $u=1$. Indeed, on one hand, we have that
\begin{equation*}
\gamma ^{\prime }\left( u\right) =\frac{\pi ^{2}}{2u^{2}}\left( \frac{1}{u}%
-1\right) \exp \left[ -\frac{\pi ^{2}}{2u}\left( \frac{1}{2u}-1\right) %
\right],
\end{equation*}%
so that%
\begin{equation*}
\gamma ^{\prime }\left( u\right) =0\Longleftrightarrow u=1.
\end{equation*}%
On the other hand, we have that%
\begin{equation*}
\gamma ^{\prime \prime }\left( u\right) =\frac{3\pi ^{2}}{2u^{2}}\left[ -%
\frac{1}{u^{2}}+\frac{1}{u}-\frac{1}{3}\right] \exp \left[ -\frac{\pi ^{2}}{%
2u}\left( \frac{1}{2u}-1\right) \right],
\end{equation*}%
so that%
\begin{equation*}
\gamma ^{\prime \prime }\left( 1\right) =-\frac{\pi ^{2}}{2}\exp \left( 
\frac{\pi ^{2}}{4}\right) <0.
\end{equation*}%
Finally, note that 
\begin{equation*}
\lim_{u\rightarrow \pm \infty }\gamma \left( u\right) 
=0<\exp \left( \frac{\pi
^{2}}{4}\right)=\gamma \left( 1\right) .
\end{equation*}%
Hence, we have that
\begin{equation*}
\exp \left[ -\left( \frac{\pi -\widetilde{\phi }}{2\varepsilon }\right)
^{2}+\left( \frac{\widetilde{\phi }}{2\varepsilon }\right) ^{2}\right] \leq
\exp \left[ \frac{\pi ^{2}}{4}\right]. \end{equation*}%
Furthermore, since
\begin{equation*}
\left( \frac{\pi -\widetilde{\phi }}{2\varepsilon }\right) ^{2r+1}=\left( 
\frac{\widetilde{\phi }}{2\varepsilon }\right) ^{2r+1}\left( \frac{\pi }{%
\widetilde{\phi }}-1\right) ^{2r+1}\leq \left( \frac{\widetilde{\phi }}{%
2\varepsilon }\right) ^{2r+1},
\end{equation*}%
we obtain 
\begin{equation*}
\widetilde{Q}_{\varepsilon ,r}\left( \vartheta \right) \leq \widetilde{C}%
_{r}^{\prime }\int_{0}^{\widetilde{\vartheta }}\frac{e^{-\left( \frac{%
\widetilde{\vartheta }}{2\varepsilon }\frac{\widetilde{\phi }}{\widetilde{%
\vartheta }}\right) ^{2}}\left( \frac{\widetilde{\phi }}{\widetilde{%
\vartheta }}\right) ^{2r+1}\left( \frac{\widetilde{\vartheta }}{2\varepsilon 
}\right) ^{2r+1}}{\sqrt{\left( 1-\left( \frac{\widetilde{\phi }}{\widetilde{%
\vartheta }}\right) ^{2}\right) }}\frac{1}{\widetilde{\vartheta }}d%
\widetilde{\phi }.
\end{equation*}%
Using the substitution $t=\sqrt{1-\left( \widetilde{\phi }/\widetilde{\vartheta }%
\right) ^{2}}$, we get%
\begin{align*}
\widetilde{Q}_{\varepsilon ,r}\left( \vartheta \right) \leq &\widetilde{C}%
_{r}^{\prime \prime }\left( \frac{\widetilde{\vartheta }}{2\varepsilon }%
\right) ^{2r+1}e^{-\left( \frac{\widetilde{\vartheta }}{2\varepsilon }%
\right) ^{2}}\int_{0}^{1}e^{\left( \frac{\widetilde{\vartheta }}{%
2\varepsilon }\right) ^{2}t^{2}}\left( 1-t^{2}\right) ^{r}dt \\
\leq &\widetilde{C}_{r}^{\prime \prime }\left( \frac{\widetilde{\vartheta }%
}{2\varepsilon }\right) ^{2r+1}e^{-\left( \frac{\widetilde{\vartheta }}{%
2\varepsilon }\right) ^{2}}\int_{0}^{1}e^{\left( \frac{\widetilde{\vartheta }%
}{2\varepsilon }\right) ^{2}t^{2}}dt \\
\leq &\widetilde{C}_{r}\left( \frac{\widetilde{\vartheta }}{2\varepsilon }%
\right) ^{2r}e^{-\left( \frac{\widetilde{\vartheta }}{2\varepsilon }\right)
^{2}},
\end{align*}%
as claimed.
\end{proof}

\begin{acknow*}
The author wishes to thank Federico Cacciafesta for the helpful
conversations and his enlightening suggestions and Domenico Marinucci for
his precious hints and his fundamental corrections. This paper is dedicated to Amalia Olivieri.
\end{acknow*}

\end{document}